\newtheorem{theorem}{Theorem}
\newtheorem{lemma}[theorem]{Lemma}
\newtheorem{proposition}[theorem]{Proposition}
\theoremstyle{remark}
\newtheorem*{remark}{Remark}
\newtheorem{example}[theorem]{Example}
\newcommand{\Z}{\mathbb{Z}}
\newcommand{\Q}{\mathbb{Q}}
\newcommand{\N}{\mathbb{N}}
\newcommand{\R}{\mathbb{R}}
\newcommand{\C}{\mathbb{C}}
\newcommand{\im}{\operatorname{Im}}
\renewcommand{\H}{\mathbb{H}}
\newcommand{\SL}{\operatorname{SL}}
\newcommand{\sgn}{\operatorname{sgn}}
\numberwithin{equation}{section}
\numberwithin{theorem}{section}
\title[Bruinier--Funke pairing and unary theta functions]{The Bruinier--Funke pairing and the orthogonal complement of unary theta functions}
\author{Ben Kane}
\address{Ben Kane, Department of Mathematics, University of Hong Kong, Pokfulam, Hong Kong.}
\email{bkane@hku.hk}
\author{Siu Hang Man}
\address{Siu Hang Man, Department of Mathematics, University of Hong Kong, Pokfulam, Hong Kong.}
\email{der.gordox@gmail.com}
\thanks{The research of the first author was supported by grant project numbers 27300314, 17302515, and 17316416 of the Research Grants Council. }
\begin{document}

\begin{abstract}
We describe an algorithm for computing the inner product between a holomorphic modular form and a unary theta function, in order to determine whether the form is orthogonal to unary theta functions without needing a basis of the entire space of modular forms and without needing to use linear algebra to decompose this space completely.
\end{abstract}
\date{\today}
\subjclass[2010]{11F37,11F12, 11F30, 11E20}
\keywords{Petersson inner products, unary theta functions, modular forms, representations by ternary quadratic polynomials}
\maketitle

\section{Introduction}

In this paper, we are interested in the decomposition of holomorphic modular forms.  Suppose that $f$ is a weight $3/2$ holomorphic modular form on some congruence subgroup $\Gamma$.  One can decompose $f$ into an Eisenstein series component $E$, a sum $\Psi$ of (cuspidal) unary theta functions (see \eqref{eqn:unarydef} for the definition), and a cusp form $g$ in the orthogonal complement of unary theta functions.  This is an orthogonal splitting with respect to the usual Petersson inner product, since the Eisenstein series is orthogonal to cusp forms.  It is thus natural to try to compute the individual pieces.  The Eisenstein series component may be computed by determining the growth of $f$ towards the cusps.  Furthermore, its Fourier coefficients may be explicitly computed, and these generally constitute the main asymptotic term of the Fourier coefficients of $f$.  In a number of combinatorial applications, this is quite useful in determining the overall growth of the coefficients of $f$.  For example, if $f$ is the generating function for the number of representations by a ternary quadratic form $Q$, then the coefficients of the Eisenstein series count the number of local representations, and the fact that this is (usually) the main asymptotic term implies an equidistribution result about the representations of integers in the \begin{it}genus\end{it} of $Q$ (i.e., those quadratic forms which are locally equivalent to $Q$).  This equidistribution result does not always hold, however; the coefficients of $\Psi$ grow as fast as the coefficients of $E$ within their support, although they are only supported in finitely many square classes (known on the algebraic side of the theory of quadratic forms as spinor exceptional square classes).  Using an upper bound of Duke \cite{Duke} for the coeffients of $g$, Duke and Schulze-Pillot \cite{DS-P90} combined these ideas to conclude an equidistribution result for the primitive representations by every element of the genus away from these spinor exceptional square classes.

It is natural to ask whether similar results hold true when the quadratic form is replaced with a \begin{it}totally positive quadratic polynomial\end{it} (i.e., a form constructed as a linear combination of a positive-definite integral quadratic form, linear terms, and the unique constant such that the quadratic polynomial only represents non-negative integers and represents zero).  One such example is sums of polygonal numbers.  For $n\in\Z$, the \begin{it}$n$th generalized $m$-gonal number\end{it} is 
\[
p_m(n):=\frac{(m-2)n^2-(m-4)n}{2},
\]
and for $a,b,c\in\N$ we investigate sums of the type 
\[
P(x,y,z)=P_{a,b,c}(x,y,z):=a p_{m}(x)+ bp_{m}(y)+cp_m(z),
\]
where $x,y,z\in\Z$.  We consider $a$, $b$, and $c$ to be fixed and vary $x$, $y$, and $z$.  We package $P$ into a generating function
\[
\sum_{x,y,z\in\Z} e^{2\pi i P(x,y,z) \tau}
\]
with $\tau\in\H:=\{ \alpha\in\C: \im(\alpha)>0\}$; this is known as the \begin{it}theta function\end{it} for $P$.  We may then investigate the Fourier coefficients of this theta function in order to attempt to understand which integers are represented by $P$.  It is actually more natural to complete the square to rewrite 
\[
p_m(x)=\frac{\left(2(m-2)x-(m-4)\right)^2}{8(m-2)} -\frac{(m-4)^2}{8(m-2)}
\]
Adding an appropriate constant, we obtain a theta function for a shifted lattice $L+\nu$, where $\nu\in \Q L$ inside a quadratic space with associated quadratic norm $Q$; quadratic forms are simply the case when $\nu=0$ (or equivalently, $\nu\in L$).  These theta functions are again modular forms and the unary theta functions govern whether the local-to-global principle fails finitely or infinitely often.
\begin{theorem}\label{thm:shiftedlattice}
 Suppose that $L$ is a ternary positive-definite lattice and $\nu$ is a vector in the associated quadratic space over $\Q$.  Suppose further that the congruence class $(M\Z+r)\cap\N_0$ is primitively represented locally by the associated quadratic form $Q$ on $L+\nu$ and denote by $a_{L+\nu}(Mn+r)$ the number of vectors of length $Mn+r$ in $L+\nu$ (i.e., the number of $\mu\in L+\nu$ for which $Q(\mu)=Mn+r$).  If 
\[
\Theta_{L+\nu}(\tau):=\sum_{\mu \in L+\nu} e^{2\pi i Q(\mu) \tau}
\]
 is orthogonal to unary theta functions, then 
\[
\left\{n\in\Z: \not\exists \mu\in L+\nu,\ Q(\mu)=Mn+r \right\}
\]
is finite.
\end{theorem}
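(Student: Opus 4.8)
The plan is to split $\Theta_{L+\nu}$ into its spectral components and then compare the sizes of the corresponding Fourier coefficients. Since $\Theta_{L+\nu}$ is a holomorphic modular form of weight $3/2$, I would first write its orthogonal decomposition $\Theta_{L+\nu} = E + \Psi + g$, where $E$ is an Eisenstein series, $\Psi$ is a linear combination of cuspidal unary theta functions, and $g$ is a cusp form orthogonal to every unary theta function. Because $E$ and $g$ are automatically orthogonal to the space spanned by unary theta functions, the hypothesis that $\Theta_{L+\nu}$ itself is orthogonal to unary theta functions forces $\Psi = 0$; this is the one place where the hypothesis enters, and it is precisely what rules out the spinor-exceptional obstruction described in the introduction. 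Hence $\Theta_{L+\nu} = E + g$, and, writing $N := Mn+r$, the representation number satisfies $a_{L+\nu}(N) = a_E(N) + a_g(N)$.

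Next I would bound the two pieces against each other. For the Eisenstein part, the coefficient $a_E(N)$ is given by Siegel's formula as an archimedean factor of size $\asymp \sqrt{N}$ times a product of local densities. The assumption that the class $(M\Z+r)\cap\N_0$ is primitively represented locally by $Q$ on $L+\nu$ guarantees that each local density is strictly positive; bounding the finitely many densities at the bad primes from below and estimating the remaining Euler product by an $L$-value that is $\gg N^{-\varepsilon}$ (via a Siegel-type lower bound), I would obtain $a_E(N) \gg_\varepsilon N^{1/2-\varepsilon}$. For the cuspidal part, the key input is Duke's bound \cite{Duke} for Fourier coefficients of half-integral weight cusp forms: since $g$ lies in the orthogonal complement of unary theta functions, its coefficients admit a power saving below the Eisenstein main term, namely $a_g(N) \ll_{g,\varepsilon} N^{1/2-\delta}$ for some fixed $\delta > 0$. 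This is exactly the combination exploited by Duke and Schulze--Pillot \cite{DS-P90}.

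Combining the two estimates gives
\[
a_{L+\nu}(N) \geq a_E(N) - |a_g(N)| \gg_\varepsilon N^{1/2-\varepsilon} - O\!\left(N^{1/2-\delta}\right),
\]
which is strictly positive once $N$ (equivalently $n$) is large enough. Therefore $a_{L+\nu}(Mn+r) > 0$ for all but finitely many $n$, so the set of $n$ admitting no representation is finite, as claimed.

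The step I expect to be the main obstacle is making the Eisenstein lower bound and Duke's upper bound genuinely applicable to the shifted lattice $L+\nu$. For $\nu \neq 0$ the theta function is most naturally a component of a vector-valued form for the Weil representation (or a scalar form on some $\Gamma_0(N')$ with character), so both Siegel's formula for $a_E(N)$ and Duke's estimate for $a_g(N)$ must be transported to this setting, and the local densities at the primes dividing the level---together with the uniform $N^{-\varepsilon}$ lower bound for the associated $L$-value---must be controlled uniformly in $n$. Verifying that Duke's power-saving bound holds for the component $g$ attached to the shifted lattice, rather than only for the classical scalar forms for which it was originally established, is the delicate part.
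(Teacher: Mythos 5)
Your proposal follows essentially the same route as the paper's proof: the orthogonal decomposition with $\Psi=0$ forced by the hypothesis, an ineffective Siegel-type lower bound of order $n^{1/2-\varepsilon}$ for the Eisenstein coefficients, and Duke's power-saving bound (the paper quotes $n^{3/7+\varepsilon}$) for the cuspidal part. The one obstacle you flag---transporting the Eisenstein lower bound to the shifted lattice---is resolved in the paper by citing van der Blij and Shimura (2004) for the identification of $a_E$ with the product of local densities, then choosing $R\in\N$ with $R\nu\in L$ so that $L_p+\nu=L_p$ for all $p\nmid R$, which reduces the Euler product (up to finitely many bounded bad-prime factors) to that of the underlying lattice $L$, a Hurwitz class number bounded below by Siegel's theorem.
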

\begin{remark}
If $\Theta_{L+\nu}$ is orthogonal to unary theta functions for every $L+\nu$ in a given genus, then one obtains an equidistribution result for representations of $Mn+r$ (for $n$ sufficiently large, but with an ineffective bound) across the entire genus in the same manner as for the case of quadratic forms. 
\end{remark}
There are a number of cases where Theorem \ref{thm:shiftedlattice} has been employed to show that certain quadratic polynomials $P$ are \begin{it}almost universal\end{it} (i.e., they represent all but finitely many integers).  In the case of triangular numbers (that is to say, $m=3$), the first author and Sun \cite{KS08} obtained a near-classification which was later fully resolved by Chan--Oh \cite{CO09}; further classification results about sums of triangular numbers and squares were completed by Chan--Haensch \cite{CH11}.  More recently, the case $a=b=c=1$ with arbitrary $m$ was considered by Haensch and the first author \cite{HaenschKane}.  In \cite{HaenschKane}, a number of almost universality results are obtained by taking advantage of the fact that the structure of modular forms may be used to determine that certain congruence classes are not in the support of the coefficients of all of the unary theta functions in the same space, and hence directly obtaining the orthogonality needed for Theorem \ref{thm:shiftedlattice}.  This was generalized by the second author and Mehta \cite{ManMehta} to include many more cases of $a,b,c$ where the same phenomenon implies orthogonality.  We next consider a case which does not immediately follow from this approach.
\begin{proposition}\label{prop:octagonal}
Every sufficiently large positive integer may be written in the form $p_{8}(x)+3p_8(y)+3p_8(z)$ with $x,y,z\in\Z$.  In other words, $p_{8}(x)+3p_8(y)+3p_8(z)$ is almost universal.  
\end{proposition}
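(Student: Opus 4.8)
The plan is to apply Theorem~\ref{thm:shiftedlattice} to the polynomial $P(x,y,z) = p_8(x) + 3p_8(y) + 3p_8(z)$, so the entire task reduces to verifying the orthogonality hypothesis: that the associated shifted-lattice theta function $\Theta_{L+\nu}$ of weight $3/2$ is orthogonal to unary theta functions. First I would complete the square as in the introduction to write each $p_8$ term in the form $\tfrac{(6x-2)^2}{24} - \tfrac{1}{6}$ (here $m=8$, so $m-2=6$ and $m-4=4$), thereby identifying the quadratic space, the lattice $L$ (the diagonal form with Gram matrix proportional to $\operatorname{diag}(1,3,3)$ after scaling), the shift vector $\nu$ determined by the linear terms, and the arithmetic progression $Mn+r$ that records the constant shift and the level. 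I would then pin down the exact weight-$3/2$ space of modular forms --- the level $M$, the character, and whether we are on $\Gamma_0(M)$ with a theta multiplier --- in which $\Theta_{L+\nu}$ lives, since the orthogonality must be checked against precisely those unary theta functions appearing in that space.

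The key point is that, unlike the cases treated in \cite{HaenschKane} and \cite{ManMehta}, one cannot here argue that the relevant square classes simply fail to appear in the support of every unary theta function in the space; the proposition is flagged as a case that "does not immediately follow from this approach." So the second, and main, step is to invoke the computational machinery that the abstract of this paper advertises: the Bruinier--Funke pairing, which supplies a way to compute the Petersson inner product $\langle \Theta_{L+\nu}, \theta \rangle$ against each candidate unary theta function $\theta$ without constructing a basis of the full space or performing a complete linear-algebra decomposition. Concretely, I would enumerate the finitely many unary theta functions spanning the relevant subspace (indexed by the spinor exceptional square classes / the shift data), and for each compute the pairing via the algorithm, showing every such inner product vanishes.

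The hard part, I expect, is the orthogonality computation itself. There are two intertwined obstacles. First, correctly identifying the finite list of unary theta functions in the ambient space: this requires getting the level and multiplier system exactly right, since an error there changes which theta functions are candidates and could invalidate the conclusion. Second, actually evaluating the Bruinier--Funke pairing for each candidate and confirming it is zero --- this is where the paper's algorithm does the real work, replacing what would otherwise be a Petersson integral or an intractable basis computation with a finite, explicit calculation on Fourier coefficients. Once all these pairings are shown to vanish, $\Theta_{L+\nu}$ decomposes into just an Eisenstein part plus a cusp form orthogonal to unary theta functions, so Theorem~\ref{thm:shiftedlattice} applies and yields that all but finitely many integers in the progression are represented; a short check that the progression $Mn+r$ captures all sufficiently large positive integers (i.e.\ that the local conditions impose no real obstruction for large $n$) then completes the proof that $p_8(x)+3p_8(y)+3p_8(z)$ is almost universal.
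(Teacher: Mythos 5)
Your proposal follows essentially the same route as the paper: reduce via Theorem~\ref{thm:shiftedlattice} to orthogonality of $\Theta_{L+\nu}$ against the unary theta functions in its space, note that congruence conditions leave only finitely many candidates (in fact the paper finds just one, $\vartheta_{\chi_{-3}}$), and evaluate each Petersson inner product as a finite sum via the Bruinier--Funke pairing (Theorem~\ref{thm:BF}) using Zwegers's explicit $\xi_{1/2}$-preimages (Theorem~\ref{thm:Zwegersfull}) together with the vector-valued transformation laws to get expansions at all cusps. One small correction for the execution: completing the square for $m=8$ gives $p_8(x)=\frac{(6x-2)^2}{12}-\frac{1}{3}$, not $\frac{(6x-2)^2}{24}-\frac{1}{6}$, and the resulting form $x^2+3y^2+3z^2$ places $\Theta_{L+\nu}$ on $\Gamma_0(108)$ as in the paper.
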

In order to show Proposition \ref{prop:octagonal}, we use Theorem \ref{thm:shiftedlattice} and show that the theta function $\Theta_{L+\nu}$ associated to $p_8(x)+3p_8(y)+3p_8(z)$ is orthogonal to all unary theta functions.  One can numerically compute the inner product with unary theta functions directly from the definition as an integral over a fundamental domain of $\SL_2(\Z)\backslash\H$ or use a method called unfolding to write it as infinite sums involving products of the Fourier coefficients of $\Theta_{L+\nu}$ and those of the unary theta functions.  However, this is not sufficient for our purposes, since we need to algebraically verify that the inner product is indeed zero and the first method is only a numerical approximation while the second method yields an infinite sum.  Since the associated space of modular forms is finite-dimensional and there is a natural orthogonal basis of Hecke eigenforms, one can decompose the space explicitly to determine whether this orthogonality holds, but the linear algebra involved is usually computationally expensive and is not feasible in many cases.  We hence use a pairing of Bruinier and Funke \cite{BruinierFunke} to rewrite the inner product as a finite sum.  The basic idea is to use Stokes' Theorem to rewrite the inner product as a (finite) linear combination of products of the Fourier coefficients of $\Theta_{L+\nu}$ and coefficients of certain ``pre-images'' of the unary theta functions under a natural differential operator.  In order to find these pre-images, we employ work of Zwegers \cite{ZwegersThesis}, who showed that these pre-images are related to the mock theta functions of Ramanujan.  

The paper is organized as follows.  In Section \ref{sec:prelim}, we give some preliminary information about modular forms and harmonic Maass forms. In Section \ref{sec:inner}, we describe how to compute the inner product using the Bruinier--Funke pairing and construct explicit pre-images of unary theta functions using \cite{ZwegersThesis} (see Theorem \ref{thm:Zwegersfull}).  
Finally, in Section \ref{sec:applications}, we prove Theorem \ref{thm:shiftedlattice} and Proposition \ref{prop:octagonal}.

\section{Preliminaries}\label{sec:prelim}
We recall some results about modular forms and harmonic Maass forms.  
\subsection{Basic definitions}
Let $\H$ denote the \begin{it}upper half-plane\end{it}, i.e., those $\tau=u+iv\in \C$ with $u\in\R$ and $v>0$.  The matrices $\gamma=\left(\begin{smallmatrix} a&b\\ c&d\end{smallmatrix}\right)\in\SL_2(\Z)$ (the space of two-by-two integral matrices with integer coefficients and determinant $1$) act on $\H$ via \begin{it}fractional linear transformations\end{it} $\gamma \tau:=\frac{a\tau+b}{c\tau+d}$.  For 
\[
j(\gamma,\tau):=c\tau+d,
\]
a \begin{it}multiplier system\end{it} for a subgroup $\Gamma\subseteq \SL_2(\Z)$ and \begin{it}weight\end{it} $r\in \R$ is a function $\nu:\Gamma\mapsto \C$ such that for all $\gamma,M\in\Gamma$ (cf. \cite[(2a.4)]{Pe1})
\[
\nu(M \gamma) j(M\gamma,\tau)^r = \nu(M)j(M,\gamma \tau)^r \nu(\gamma)j(\gamma,\tau)^r.
\]
The \begin{it}slash operator\end{it} $|_{r,\nu}$ of weight $r$ and multiplier system $\nu$ is then 
\[
f|_{r,\nu}\gamma (\tau):=\nu(\gamma)^{-1} j(\gamma,\tau)^{-r} f(\gamma \tau).
\]
A \begin{it}harmonic Maass form\end{it} of weight $r\in\R$ and multiplier system $\nu$ for $\Gamma$ is a function $f:\H\to\C$ satisfying the following criteria:
\noindent

\noindent
\begin{enumerate}[leftmargin=*]
\item
The function $f$ is annihilated by the weight $r$ hyperbolic Laplacian 
\[
\Delta_r:=-\xi_{2-r}\circ \xi_{r},
\]
where 
\begin{equation}\label{eqn:xidef}
\xi_{r}:=2iv^r\overline{\frac{\partial}{\partial \overline{\tau}}}.
\end{equation}

\item
For every $\gamma\in\Gamma$, we have 
\begin{equation}\label{eqn:modularity}
f|_{r,\nu}\gamma= f.
\end{equation}
\item
The function $f$ exhibits at most linear exponential growth towards every \begin{it}cusp\end{it} (i.e., those elements of $\Gamma\backslash(\Q\cup\{i\infty\})$).  This means that at each cusp $\varrho$ of $\Gamma\backslash \H$, the Fourier expansion of the function $f_{\varrho}(\tau):=f|_{r,\nu}\gamma_{\varrho}(\tau)$ has at most finitely many terms which grow, where $\gamma_{\varrho}\in \SL_2(\Z)$ sends $i\infty$ to $\varrho$.  
\end{enumerate}
If $f$ is holomorphic and the Fourier expansion at each cusp is bounded, then we call $f$ a holomorphic modular form.  Furthermore, if $f$ is a holomorphic modular form and vanishes at every cusp (i.e., the limit $\lim_{\tau\to i\infty} f_{\varrho}(\tau)=0$), then we call $f$ a \begin{it}cusp form\end{it}.  

\subsection{Half-integral weight forms}
We are particularly interested in the case where $r=k+1/2$ with $k\in\N_0$ and, in the example given in Theorem \ref{thm:shiftedlattice} that motivates this study we may choose $\Gamma$ to be an intersection between the groups
\begin{align*}
\Gamma_0(M)&:=\left\{ \left(\begin{matrix}a&b\\ c&d\end{matrix}\right)\in\SL_2(\Z): M\mid c\right\},\\
\Gamma_1(M)&:=\left\{ \left(\begin{matrix}a&b\\ c&d\end{matrix}\right)\in\SL_2(\Z): M\mid c, a\equiv d\equiv 1\pmod{M}\right\}
\end{align*}
for some $M\in\N$ divisible by $4$.  The multiplier system we are particularly interested in is given in \cite[Proposition 2.1]{Shimura}, although we do not need the explicit form of the multiplier for this paper.

If $T^N\in \Gamma$ with $T:=\left(\begin{smallmatrix} 1&1\\ 0 &1\end{smallmatrix}\right)$, then by \eqref{eqn:modularity} we have $f(\tau+N)=f(\tau)$, and hence $f$ has a Fourier expansion ($c_{f}(v;n)\in\C$)
\begin{equation}\label{eqn:fexp}
f(\tau)=\sum_{n\gg -\infty} c_{f}(v;n) e^{\frac{2\pi i n \tau}{N}}.
\end{equation}
Moreover, $f$ is meromorphic if and only if $c_f(v;n)=c_f(n)$ is independent of $v$.  For holomorphic modular forms, an additional restriction $n\geq 0$ follows from the fact that $f$ is bounded as $\tau\to i\infty$.  There are similar expansions at the other cusps.  One commonly sets $q:=e^{2\pi i \tau}$ and associates the above expansion with the corresponding formal power series, using them interchangeably unless explicit analytic properties of the function $f$ are required.

\subsection{Theta functions for quadratic polynomials}
In \cite[(2.0)]{Shimura}, Shimura defined theta functions associated to lattice cosets $L+\nu$ (for a lattice $L$ of rank $n$) and polynomials $P$ on lattice points.  Namely, he defined
\[
\Theta_{L+\nu,P}(\tau):=\sum_{\boldsymbol{x}\in L+\nu} P(\boldsymbol{x}) q^{Q(\boldsymbol{x})},
\]
where $Q$ is the quadratic map in the associated quadratic space.  We omit $P$ when it is trivial.  In this case, we may write $r_{L+\nu}(\ell)$ for the number of elements in $L+\nu$ of norm $\ell$ and we get
\begin{equation}\label{eqn:thetadef}
\Theta_{L+\nu}(\tau)=\sum_{\ell\geq 0} r_{L+\nu}(\ell) q^{\ell}.
\end{equation}
Shimura then showed (see \cite[Proposition 2.1]{Shimura}) that $\Theta_{L+\nu}$ is a modular form of weight $n/2$ for $\Gamma=\Gamma_0(4N^2)\cap \Gamma_1(2N)$ (for some $N$ which depends on $L$ and $\nu$) and a particular multiplier. Note that we have taken $\tau\mapsto 2N\tau$ in Shimura's definition.  To show the modularity properties, for $\gamma=\left(\begin{smallmatrix}a&b\\ c&d\end{smallmatrix}\right)\in \Gamma$, we compute
\begin{equation}\label{eqn:flipNgamma}
2N\gamma(\tau)=2N\frac{a\tau+b}{c\tau+d} = \frac{a(2N\tau) + 2Nb}{\frac{c}{2N} (2N\tau)+d} = \left(\begin{matrix} a & 2Nb\\ \frac{c}{2N} & d\end{matrix}\right) (2N\tau).
\end{equation}
Since $\gamma\in \Gamma$, we have 
\[
\left(\begin{matrix} a & 2Nb\\ \frac{c}{2N} & d\end{matrix}\right)\in \Gamma(2N):=\left\{ \gamma= \left(\begin{matrix}a&b\\ c&d\end{matrix}\right)\in\SL_2(\Z): \gamma\equiv I_{2}\pmod {N}\right\}\subset \Gamma_{1}(2N),
\]
so we may then use \cite[Proposition 2.1]{Shimura}.  Specifically, the multiplier is the same multiplier as $\Theta^3$, where $\Theta(\tau):=\sum_{n\in\Z} q^{n^2}$ is the classical Jacobi theta function.  

We only require the associated polynomial in one case.  Namely, for $n=1$ and $P(x)=x$, we require the \begin{it}unary theta functions\end{it}  (see \cite[(2.0)]{Shimura} with $N\mapsto N/t$, $P(m)=m$, $A=(N/t)$, 
and $\tau\mapsto 2N\tau$)
\begin{equation}\label{eqn:unarydef}
\vartheta_{h,t}(\tau)=\vartheta_{h,t,N}(\tau):=\sum_{\substack{r\in\Z\\ r\equiv h\pmod{\frac{2N}{t}}}} r q^{t r^2},
\end{equation}
where $h$ may be chosen modulo $2N/t$ and $t$ is a squarefree divisor of $2N$.  These are weight $3/2$ modular forms on $\Gamma_0(4N^2)\cap \Gamma_1(2N)$ with the same multliplier system as $\Theta_{L+\nu}$.

\section{The Bruinier--Funke pairing}\label{sec:inner}
In this section, we describe how to compute the inner product with unary theta functions.  We again begin by noting the decomposition of a weight $3/2$ modular form $f$ as
\[
f=E+\Psi+g,
\]
where $E$ is an Eisenstein series, $\Psi$ is a linear combination of unary theta functions, and $g$ is a cusp form in the orthogonal complement of unary theta functions.  Since the decomposition above is an orthogonal splitting with respect to the Petersson inner product, one may instead compute the inner product 
\[
\left<f,\Theta_j\right>
\]
for each unary theta function $\Theta_j$.  Recall that Petersson's classical definition of the inner product between two holomorphic modular forms $f$ and $h$ (for which $fh$ is cuspidal) is (here and throughout $\tau=u+iv$) 
\[
\left<f,h\right>:=\frac{1}{\left[\SL_2(\Z):\Gamma\right]} \int_{\Gamma\backslash\H} f(\tau) \overline{h(\tau)} v^{\frac{3}{2}} \frac{du dv}{v^2},
\]
where $\left[\SL_2(\Z):\Gamma\right]$ denotes the index of $\Gamma$ in $\SL_2(\Z)$.  While one may be able to approximate the integral well numerically, we are interested in obtaining a precise (algebraic) formula for the inner product (and hence an explicit formula for $\Psi$).  In order to do so, we rely on a formula of Bruinier and Funke (see \cite[Theorem 1.1 and Proposition 3.5]{BruinierFunke}) known as the \begin{it}Bruinier--Funke pairing\end{it}.  The basic premise is to use Stokes' Theorem in order to compute the inner product in a different way.  Suppose that we have a preimage $\mathcal{H}$ under the operator $\xi_{1/2}$, where
\[
\xi_{\kappa}:=2iv^{\kappa} \overline{\frac{\partial}{\partial \overline{\tau}}}
\]
is a differential operator which sends functions satisfying weight $\kappa$ modularity to functions satisfying weight $2-\kappa$ modularity.  Note that since $h$ is holomorphic and $\xi_{1/2}(\mathcal{H})=h$, the fact that the kernel of $\xi_{2-\kappa}$ is holomorphic functions implies that the function $\mathcal{H}$ is necessarily annihilated by the weight $\kappa$ hyperbolic Laplacian (for $\kappa=1/2$) 
\[
\Delta_{\kappa}=-\xi_{2-\kappa}\circ\xi_{\kappa}.
\]
If we further impose that $\mathcal{H}$ is modular of weight $\kappa$ on $\Gamma$ and has certain restrictions on its singularities in $\Gamma\backslash(\H\cup \Q\cup\{i\infty\})$ (see Section \ref{sec:prelim} for further details), then we obtain a harmonic Maass form.  Due to the fact that $\Gamma$ is a congruence subgroup, it contains $T^{N}$ for some $N$, where $T:=\left(\begin{smallmatrix}1&1\\ 0 &1\end{smallmatrix}\right)$.  Similarly, if $\gamma_{\varrho}\in \SL_2(\Z)$ sends $i\infty$ to a cusp $\varrho$, then $T^{N_{\varrho}}$ is contained in $\gamma_{\varrho}^{-1}\Gamma \gamma_{\varrho}$ for some $N_{\varrho}\in\N$;  here $N_{\varrho}$ is known as the cusp width at $\varrho$.  Using this, one can show that it has a Fourier expansion around each cusp $\varrho$ of $\Gamma$ of the shape
\[
\mathcal{H}_{\varrho}(\tau)=\sum_{n\in\Z} c_{\mathcal{H},\varrho}(v;n) e^{\frac{2\pi i n\tau}{N_{\varrho}}},
\]
for some $c_{\mathcal{H},\varrho}(y;n)\in\C$, and where $\mathcal{H}_{\varrho}:=\mathcal{H}|_{\kappa} \gamma_{\varrho}$ is the expansion around $\varrho$.  Note however, that since $\mathcal{H}$ is not holomorphic, the Fourier coefficients may depend on $v$.  Solving the differential equation $\Delta_{\kappa}(\mathcal{H})=0$ termwise yields a natural splitting of the Fourier expansion into holomorphic and non-holomorphic parts, namely 
\[
\mathcal{H}_{\varrho}(\tau)=\mathcal{H}_{\varrho}^+(\tau) + \mathcal{H}_{\varrho}^-(\tau)
\]
with 
\begin{align*}
\mathcal{H}_{\varrho}^+(\tau)&=\sum_{n\gg -\infty} c_{\mathcal{H},\varrho}^+(n) e^{\frac{2\pi i n\tau}{N_{\varrho}}}\\
\mathcal{H}_{\varrho}^-(\tau)&=c_{\mathcal{H},\varrho}^-(0)v^{2-\kappa} + \sum_{\substack{n\ll \infty\\ n\neq 0}} c_{\mathcal{H},\varrho}^-(n) \Gamma\left(2-\kappa, -\frac{4\pi n v}{N_{\varrho}}\right) e^{\frac{2\pi i n\tau}{N_{\varrho}}},
\end{align*}
where now the coefficients are independent of $v$.  It is these Fourier coefficients which are used by Bruinier and Funke to compute the inner product explicitly in \cite[Proposition 3.5]{BruinierFunke}.  To state their formula, let $\mathcal{S}_{\Gamma}$ denote the set of cusps and write 
\[
f_{\varrho}(\tau)=\sum_{n\geq 0} c_{f,\varrho}(n)e^{\frac{2\pi i n\tau}{N_{\varrho}}}.
\]
\begin{theorem}[Bruinier--Funke]\label{thm:BF}
We have 
\[
\left< f,h\right> = \frac{1}{\left[\SL_2(\Z): \Gamma\right]} \sum_{\varrho\in \mathcal{S}_{\Gamma}}\sum_{n\geq 0} c_{f,\varrho}(n) c_{\mathcal{H},\varrho}^+(-n).
\]
\end{theorem}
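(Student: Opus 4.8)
The plan is to follow Bruinier and Funke's original strategy: realize the Petersson integrand as an exact differential and apply Stokes' theorem, so that the interior integral collapses to boundary (horocycle) integrals at the cusps. First I would exploit the hypothesis $h=\xi_{1/2}(\mathcal{H})$ to rewrite the integrand. Writing $\xi_{1/2}(\mathcal{H})=2iv^{1/2}\overline{\partial_{\overline\tau}\mathcal{H}}$ and conjugating yields $\overline{h}=-2iv^{1/2}\,\partial_{\overline\tau}\mathcal{H}$, whereupon the weight and measure factors collapse and one finds
\[
f(\tau)\overline{h(\tau)}v^{3/2}\frac{du\,dv}{v^2}=-2i\,f\,\partial_{\overline\tau}\mathcal{H}\,du\,dv.
\]
Because $f$ is holomorphic, $\partial_{\overline\tau}f=0$, and using $d\overline\tau\wedge d\tau=2i\,du\wedge dv$ this $2$-form is exactly $-d\left(f\mathcal{H}\,d\tau\right)$. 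Thus the integrand is exact, with the relevant $1$-form being $\omega:=f\mathcal{H}\,d\tau$.

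Next I would verify that $\omega$ descends to $\Gamma\backslash\H$. The product $f\mathcal{H}$ has weight $3/2+1/2=2$; moreover $\xi_{1/2}$ intertwines the weight $1/2$ multiplier of $\mathcal{H}$ with the complex-conjugate multiplier on $h$, and since $\langle f,h\rangle$ forces $f$ and $h$ to share a multiplier, the multiplier of $f\mathcal{H}$ is trivial. Hence $\omega$ is a $\Gamma$-invariant $1$-form, and I would apply Stokes' theorem to a fundamental domain truncated at height $T$ in the local coordinate at each cusp. The integrals along the vertical identifying sides cancel in pairs by the $\Gamma$-invariance of $\omega$, leaving only the horocyclic pieces, one per cusp $\varrho\in\mathcal{S}_{\Gamma}$. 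Straightening $\varrho$ to $i\infty$ via $\gamma_\varrho$ and using invariance of $\omega$, each such piece becomes $\int f_\varrho\mathcal{H}_\varrho\,d\tau$ taken over one period of the horocycle at height $T$.

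I would then substitute the Fourier expansions of $f_\varrho$ and $\mathcal{H}_\varrho=\mathcal{H}_\varrho^{+}+\mathcal{H}_\varrho^{-}$ and integrate in $u$ over a full period. Orthogonality of the additive characters $e^{2\pi i k u/N_\varrho}$ forces the frequencies to cancel, so only products of coefficients with opposite index survive; from the holomorphic part these assemble into the constant Fourier coefficient of $f_\varrho\mathcal{H}_\varrho^{+}$, namely $\sum_{n\geq 0}c_{f,\varrho}(n)c^{+}_{\mathcal{H},\varrho}(-n)$. Summing over cusps and dividing by $[\SL_2(\Z):\Gamma]$ then reproduces the stated formula once the normalization (cusp widths, orientation of the boundary, and the hyperbolic measure) is tracked carefully.

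The step I expect to be the main obstacle is the passage to the limit $T\to\infty$, and in particular showing that the non-holomorphic part $\mathcal{H}_\varrho^{-}$ contributes nothing. Pairing $\mathcal{H}_\varrho^{-}$ with $f_\varrho$ and extracting the frequency-zero part produces, on the one hand, incomplete Gamma factors $\Gamma\left(\tfrac{3}{2},\,4\pi n v/N_\varrho\right)$ which decay as $v\to\infty$, and on the other hand a single constant-term contribution proportional to $c_{f,\varrho}(0)\,c^{-}_{\mathcal{H},\varrho}(0)\,v^{3/2}$, which would otherwise grow. The latter is killed precisely by the cuspidality hypothesis underlying the inner product: the vanishing of the relevant constant terms at every cusp forces the product $c_{f,\varrho}(0)\,c^{-}_{\mathcal{H},\varrho}(0)$ to be zero. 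Justifying the termwise integration and the interchange of summation with the limit (i.e.\ absolute convergence of the resulting series, resting on the polynomial growth of the $c_{f,\varrho}$ and the finiteness of the principal part of $\mathcal{H}^{+}$) is the remaining technical point; the conceptual content is entirely contained in the exact-form identity and the resulting reduction to cuspidal boundary data.
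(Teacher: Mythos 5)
Your proposal is correct and follows essentially the same route as the paper, which does not reprove this statement but quotes it from Bruinier--Funke, whose argument is precisely the Stokes'-theorem computation you outline: rewriting $f(\tau)\overline{h(\tau)}v^{3/2}\frac{du\,dv}{v^2}$ as the exact form $-d\left(f\mathcal{H}\,d\tau\right)$, collapsing the integral to horocycle pieces at the cusps, and using cuspidality together with the decay of the incomplete Gamma terms to leave only $\sum_{n\geq 0}c_{f,\varrho}(n)c^{+}_{\mathcal{H},\varrho}(-n)$ at each cusp. Your identification of the constant-term contribution $c_{f,\varrho}(0)\,c^{-}_{\mathcal{H},\varrho}(0)\,v^{2-\kappa}$ as the one potentially divergent piece, killed by the cuspidality hypothesis on $f\overline{h}$, is exactly the key point of that argument.
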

Theorem \ref{thm:BF} is algebraic, precise, and is actually a finite sum since there are only finitely many $n$ for which $c_{\mathcal{H},\varrho}^+(-n)\neq 0$, allowing one to explicitly compute the inner product.  We will assume that sufficiently many Fourier coefficients of $f$ are known, or in other words the input to our algorithm will be the Fourier coefficients $c_{f,\varrho}(n)$ and the function $h$, which in our case will be a unary theta function.  The assumption that the expansions are known at every cusp may at first seem to be a somewhat strong assumption, since in combinatorial applications we often only know the expansion at one cusp.  However, when $f=\Theta_{L+\nu}$ is the theta function for a shifted lattice, Shimura \cite{Shimura} has computed the modularity properties for all of $\SL_2(\Z)$ and one obtains modularity for $\SL_2(\Z)$ in a vector-valued sense, where the components of the vector are the functions $f_{\varrho}$.  In other words, given just the theta function $f$, one can determine the functions $f_{\varrho}$ as long as one can write $\gamma_{\varrho}$ explicitly in terms of the generators $S:=\left(\begin{smallmatrix} 0 &-1\\ 1&0\end{smallmatrix}\right)$ and $T$ of $\SL_2(\Z)$.  Although this rewriting is well-known, we provide the details for the convenience of the reader.
\begin{lemma}\label{lem:matrixrewrite}
Given $\varrho=a/c$, there is an algorithm to determine $\gamma_{\varrho}\in\SL_2(\Z)$ explicitly in terms of $S$ and $T$.
\end{lemma}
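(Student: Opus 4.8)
The plan is to split the task into two parts: first produce some matrix $\gamma_\varrho\in\SL_2(\Z)$ whose action sends $i\infty$ to $\varrho=a/c$, and then rewrite that explicit matrix as a word in the generators $S$ and $T$. For the first part, note that a matrix $\left(\begin{smallmatrix} a' & b \\ c' & d\end{smallmatrix}\right)\in\SL_2(\Z)$ sends $i\infty$ to $a'/c'$, so (assuming $a/c$ is written in lowest terms, and treating $\varrho=i\infty$ as the trivial case $\gamma_\varrho=I$) it suffices to take the first column to be $(a,c)^T$ and to complete it to a determinant-one matrix. Since $\gcd(a,c)=1$, Bézout's identity guarantees integers $b,d$ with $ad-bc=1$, and the extended Euclidean algorithm produces them explicitly. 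This yields a concrete $\gamma_\varrho$.

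For the second part, I would run the Euclidean algorithm on the first column of $\gamma_\varrho$ using left multiplication by the generators, exploiting the fact that $S$ and $T$ generate $\SL_2(\Z)$. The two relevant operations are $T^{-n}\gamma$, whose first column is $(a-nc,\,c)^T$, and $S\gamma$, whose first column is $(-c,\,a)^T$; the former reduces the top entry modulo $c$ and the latter swaps the two entries up to sign. Alternating these two steps is precisely the Euclidean algorithm applied to the pair $(a,c)$: at each swap the lower-left entry strictly decreases in absolute value, so after finitely many steps it becomes $0$. At that point the accumulated product $W\gamma_\varrho$ (with $W$ an explicit word in $S$ and $T$) is upper triangular with $\pm1$ on the diagonal, hence equals $\pm T^{k}$ for some $k\in\Z$. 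Writing $-I=S^2$, we conclude $W\gamma_\varrho=S^{2\epsilon}T^{k}$ with $\epsilon\in\{0,1\}$, and therefore $\gamma_\varrho=W^{-1}S^{2\epsilon}T^{k}$, which is an explicit word in $S$ and $T$ once we replace $S^{-1}$ by $S^{3}$ and invert the $T$-powers.

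I expect the main technical point, though not a deep one, to be the bookkeeping of signs coming from the relation $S^2=-I$. Because $-I$ fixes every point of $\H$, the sign is irrelevant for the statement that $\gamma_\varrho$ sends $i\infty$ to $\varrho$, but it must be tracked carefully if one insists on an identity of matrices rather than of fractional linear transformations; this is where a little care is needed to decide whether a factor of $S^2$ appears. Termination and correctness of the reduction are immediate from the strict decrease of the lower-left entry, exactly as in the usual proof that $S$ and $T$ generate $\SL_2(\Z)$, so the only remaining work is the routine verification that the recorded word indeed reconstructs $\gamma_\varrho$.
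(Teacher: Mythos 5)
Your proposal is correct and follows essentially the same route as the paper's proof: complete the column $(a,c)^T$ to an $\SL_2(\Z)$ matrix via B\'ezout, then reduce to $\pm T^k$ by a Euclidean-algorithm descent on the lower-left entry using left multiplication by $S$ and powers of $T$, and finally invert the recorded word. The only cosmetic difference is that the paper merges each reduction step into a single multiplication by $ST^r$ (with $r$ chosen so $|a_j+rc_j|\leq |c_j|/2$), whereas you alternate $T^{-n}$ and $S$ and track the sign $S^2=-I$ explicitly; both terminate for the same reason.
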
  
\begin{proof}
First, we need to construct $\gamma_{\varrho}$ for which $\gamma_{\varrho}(i\infty)=a/c$.  In other words, we want a matrix $\left(\begin{smallmatrix} a &b\\ c&d\end{smallmatrix}\right)\in \SL_2(\Z)$.  Since $ad-bc=1$ and $a$ and $c$ are necessarily prime, we see that $b$ and $-c$ are precisely the coefficients from Bezout's theorem.  We next construct the sequence of $S$ and $T$ recursively as follows.  

Let $\gamma_0:=\gamma_{\varrho}$.  At step $j+1$ (with $j\in\N_0$) we will construct $\gamma_{j+1}$ inductively/recursively from $\gamma_{j}$ by multiplying either by $S$ or by $ST^m$ for some $m\in\Z$, and eventually obtain $\gamma_{\ell}=\pm T^m$ for some step $\ell$ and $m\in\Z$.  Suppose that 
\[
\gamma_{j}=\left(\begin{matrix} a_j & b_j\\ c_j & d_j\end{matrix}\right).
\]
If $c_j=0$, then $a_j=d_j=\pm 1$ and $\ell=j$ with $\gamma_j=\pm T^{\pm b_j}$, and reversing back through the recursion gives the expansion of $\gamma_0$ in terms of $S$ and $T$, so we are done.  

If $c_j\neq 0$, then we choose $r\in \Z$ such that $|a_j+r c_j|$ is minimal (if there are two choices, i.e, if $a_j+rc_j=c_j/2$ for some $r$, then we take this choice of $r$).  We then set 
\[
\gamma_{j+1}:=ST^r \gamma_{j-1} = S\left(\begin{matrix} a_j+r c_j & b_j+rd_j\\ c_j & d_j\end{matrix}\right)=\left(\begin{matrix}-c_j  &-d_j \\ a_j+r c_j & b_j+rd_j\end{matrix}\right). 
\]
Note that $|a_j+rc_j|\leq |c_j|/2$ by construction, so the entry in the lower-left corner is necessarily smaller at step $j+1$ than it was at step $j$.  Therefore the algorithm will halt after a finite number of steps.
\end{proof}

In order to determine the inner product $\left<f,h\right>$, it remains to compute the preimage $\mathcal{H}$ and compute its Fourier expansion.  Luckily, motivated by Ramanujan's mock theta functions, Zwegers \cite{ZwegersThesis} constructed pre-images of the unary theta functions using a holomorphic function $\mu$ which he ``completed'' to obtain a harmonic Maass form (actually, he is even able to view his completed object as a non-holomorphic Jacobi form, and one obtains the pre-images of unary theta functions by plugging in elements of $\Q+\Q\tau$ for the elliptic variable $z$).  Choosing $z$ to be an appropriate element of $\Q+\Q\tau$, one may compute the expansions at all cusps by viewing Zwegers's function as a component of a vector-valued modular form.  As a first example, Zwegers himself computed the corresponding vector when the unary theta function is given by 
\[
\Theta_0(\tau):=\sum_{n\in\Z} \left(n+\frac{1}{6}\right) e^{3\pi i \left(n+\frac{1}{6}\right)^2\tau}.
\]
This is related to the third order mock theta function $f(q)$, and played an important role in Bringmann and Ono's \cite{BringmannOnoInvent} proof of the Andrews--Dragonette conjecture.  One may find the full transformation properties listed in \cite[Theorem 2.1]{BringmannOnoInvent}.  Specifically, let 
\[
f(q):=1+\sum_{n=1}^{\infty} \frac{q^{n^2}}{(1+q)^2\!\left(1+q^2\right)^2\cdots\!\left(1+q^n\right)^2}
\]
and 
\[
\omega(q):=\sum_{n=0}^{\infty} \frac{q^{n^2+2n}}{(1-q)^2\!\left(1-q^3\right)^2\cdots\!\left(1-q^{2n+1}\right)^2}.
\]
Setting ($q:=e^{2\pi i \tau}$)
\[
F(\tau)=\left(F_0(\tau),F_1(\tau),F_2(\tau)\right)^T:=\!\left( q^{-\frac{1}{24}}f(q), 2q^{\frac{1}{3}}\omega\!\left(q^{\frac{1}{2}}\right), 2q^{\frac{1}{3}}\omega\!\left(-q^{\frac{1}{2}}\right)\right)^T, 
\]
we have the following.
\begin{theorem}[Zwegers \cite{ZwegersThesis}]\label{thm:Zwegers}
There is a vector-valued harmonic Maass form $\mathcal{H}=\!\left(\mathcal{H}_0,\mathcal{H}_1,\mathcal{H}_2\right)^T$ whose meromorphic part is $F$ (component-wise).  The harmonic Maass form satisfies 
\[
\xi_{\frac{1}{2}}\left(\mathcal{H}_0\right)= \Theta_0
\]
and the modularity properties for $\SL_2(\Z)$ given by 
\begin{align*}
\mathcal{H}(\tau+1) &= \left(\begin{matrix}\zeta_{24}^{-1} & 0 & 0\\ 0& 0&\zeta_3\\ 0&\zeta_3&0\end{matrix}\right)\mathcal{H}(\tau),\\
\mathcal{H}\left(-\frac{1}{\tau}\right) &= \sqrt{-i\tau}\left(\begin{matrix}0 & 1 & 0\\ 1& 0&0\\ 0&0&-1\end{matrix}\right)\mathcal{H}(\tau),
\end{align*}
where $\zeta_n:=e^{2\pi i/n}$.  
\end{theorem}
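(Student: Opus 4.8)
The plan is to realize each component $F_j$ as a specialization of Zwegers's Appell--Lerch function $\mu(u,v;\tau)$ at a suitable torsion point $(u_j,v_j)\in(\Q+\Q\tau)^2$, and to build $\mathcal{H}_j$ from the corresponding non-holomorphic completion. Recall Zwegers's function
\[
\mu(u,v;\tau)=\frac{e^{\pi i u}}{\vartheta(v;\tau)}\sum_{n\in\Z}\frac{(-1)^n q^{n(n+1)/2}e^{2\pi i n v}}{1-e^{2\pi i u}q^n},
\]
where $\vartheta(v;\tau)=\sum_{\nu\in\frac12+\Z}e^{\pi i\nu^2\tau+2\pi i\nu(v+\frac12)}$ is the Jacobi theta function. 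This $\mu$ is holomorphic in $\tau$ but only \emph{near}-modular; Zwegers showed that adding the real-analytic correction
\[
R(u;\tau)=\sum_{\nu\in\frac12+\Z}\Bigl(\sgn(\nu)-E\bigl((\nu+\tfrac{\im(u)}{\im(\tau)})\sqrt{2\im(\tau)}\bigr)\Bigr)(-1)^{\nu-\frac12}e^{-\pi i\nu^2\tau-2\pi i\nu u},
\]
with $E(z)=2\int_0^z e^{-\pi t^2}\,dt$, produces $\hat\mu(u,v;\tau):=\mu(u,v;\tau)+\tfrac{i}{2}R(u-v;\tau)$, which transforms as a (non-holomorphic) Jacobi form of weight $\tfrac12$ and is annihilated by $\Delta_{1/2}$ in $\tau$. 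The first step is therefore to record from \cite{ZwegersThesis} the elliptic transformation laws of $\hat\mu$ in $u,v$ together with the two modular laws
\[
\hat\mu(u,v;\tau+1)=e^{-\pi i/4}\hat\mu(u,v;\tau),\qquad \tfrac{1}{\sqrt{-i\tau}}\,e^{\pi i(u-v)^2/\tau}\,\hat\mu\Bigl(\tfrac{u}{\tau},\tfrac{v}{\tau};-\tfrac1\tau\Bigr)=-\hat\mu(u,v;\tau).
\]

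Next I would pin down the torsion points. Using the classical Watson-type identities for the order-three mock theta functions (reformulated through $\mu$ in \cite{ZwegersThesis}), each of $q^{-1/24}f(q)$, $2q^{1/3}\omega(q^{1/2})$, and $2q^{1/3}\omega(-q^{1/2})$ can be written as a constant multiple of $\mu(u_j,v_j;\tau)$ (plus, where needed, a modular theta quotient) for explicit sixth-order torsion points $(u_j,v_j)$. Defining $\mathcal{H}_j$ to be the same constant multiple of $\hat\mu(u_j,v_j;\tau)$ then yields a vector $\mathcal{H}=(\mathcal{H}_0,\mathcal{H}_1,\mathcal{H}_2)^T$ whose holomorphic part is $F$ by construction, with each $\mathcal{H}_j$ annihilated by $\Delta_{1/2}$ and of at most linear exponential growth at the cusps, hence a harmonic Maass form. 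To verify $\xi_{1/2}(\mathcal{H}_0)=\Theta_0$, note that $\mu$ is holomorphic and so killed by $\xi_{1/2}$, leaving only the correction; Zwegers's formula for $\partial_{\overline\tau}R(u;\tau)$ expresses it as $v^{-1/2}$ times a Gaussian times a sum over $\nu\in\frac12+\Z$ weighted by $\nu$, so that the factor $v^{1/2}$ in $\xi_{1/2}$ cancels and leaves a holomorphic unary theta series $\sum_{\nu}\nu\,(\pm1)\,q^{(\text{quadratic in }\nu)}$. Reindexing via $m=6n+1$ (so the exponent becomes $m^2/24$ and the linear weight becomes $n+\tfrac16$) identifies it with $\Theta_0=\sum_{n}(n+\tfrac16)e^{3\pi i(n+\frac16)^2\tau}$ up to the normalization built into $F_0$.

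Finally, for the modularity matrices, the $T$-transformation can be read off directly from the $q$-expansions: since $q=e^{2\pi i\tau}$ is $T$-invariant, the prefactor $q^{-1/24}\mapsto\zeta_{24}^{-1}q^{-1/24}$ gives the $(1,1)$-entry, while $q^{1/3}\mapsto\zeta_3 q^{1/3}$ together with $q^{1/2}\mapsto-q^{1/2}$ (swapping $\omega(q^{1/2})\leftrightarrow\omega(-q^{1/2})$) interchanges $F_1\leftrightarrow F_2$ with phase $\zeta_3$, producing the lower $2\times2$ block. The $S$-transformation is the substantive one and requires Zwegers's $S$-law for $\hat\mu$: evaluating $\hat\mu(u_j/\tau,v_j/\tau;-1/\tau)$ and returning to $\tau$ via $\tfrac{1}{\sqrt{-i\tau}}e^{\pi i(u_j-v_j)^2/\tau}$, one tracks how the three torsion points map into one another to obtain the factor $\sqrt{-i\tau}$ and the permutation $\left(\begin{smallmatrix}0&1&0\\1&0&0\\0&0&-1\end{smallmatrix}\right)$. \textbf{The main obstacle} lies precisely here: one must verify that all the Gaussian phases $e^{\pi i(\cdots)/\tau}$ coming from the elliptic transformation factors of $\hat\mu$ cancel against the quadratic exponent $e^{\pi i(u_j-v_j)^2/\tau}$, collapsing to the single automorphy factor $\sqrt{-i\tau}$ and the clean entries $0,\pm1$; simultaneously, in the $\xi_{1/2}$ computation one must confirm that the shadow produced by $\partial_{\overline\tau}R$ matches $\Theta_0$ exactly under the reindexing $m=6n+1$, rather than merely up to a scalar or a finite shift of support.
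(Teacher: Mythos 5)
The first thing to note is that the paper offers no proof of this statement at all: it is quoted as an external result from Zwegers's thesis \cite{ZwegersThesis}, with the transformation properties referenced to \cite[Theorem 2.1]{BringmannOnoInvent}. So there is no internal proof to compare yours against; the closest relative inside the paper is its proof of Theorem \ref{thm:Zwegersfull}, which uses exactly the strategy you propose --- specialize $\widetilde{\mu}$ at torsion points in $\Q\tau+\Q$, invoke Zwegers's Theorem 1.11 for the modular and elliptic transformation laws, and compute $\xi_{1/2}$ via the Eichler-integral representation of $R$. In that sense your blueprint is methodologically sound, and the two modular laws you record for $\widetilde{\mu}$ are stated correctly.

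As a proof of the theorem, however, your proposal has a genuine gap: everything rests on the claim that each of $q^{-1/24}f(q)$, $2q^{1/3}\omega\!\left(q^{1/2}\right)$, $2q^{1/3}\omega\!\left(-q^{1/2}\right)$ equals an explicit constant multiple of $\mu(u_j,v_j;\tau)$ at explicit torsion points (``plus, where needed, a modular theta quotient''), and you never exhibit those points, constants, or identities. This is not a routine step that can be deferred: converting the Eulerian $q$-hypergeometric series $f(q)$ and $\omega(q)$ into Appell--Lerch sums is the substantive content here, essentially equivalent in depth to Watson's transformation formulas --- which is in fact what Zwegers's own argument uses (he completes $F$ by non-holomorphic Eichler integrals of unary theta functions and shows the Mordell integrals appearing in Watson's formulas cancel). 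Such $\mu$-representations do exist in the later literature (e.g.\ work of Kang and of Hickerson--Mortenson), so your route could in principle be completed, but without them nothing is proved. Two further points compound the gap. First, if any component requires a nonzero theta-quotient correction, then the $\widetilde{\mu}$-specialization and $F_j$ differ by a weakly holomorphic form, and you must additionally check that these corrections themselves transform with the same two matrices; otherwise the claimed $S$- and $T$-matrices for $\mathcal{H}$ do not follow. Second, the two verifications you label ``the main obstacle'' --- the collapse of the Gaussian phases to the single factor $\sqrt{-i\tau}$ with entries $0,\pm1$, and the exact (not merely up-to-scalar) identification of the shadow with $\Theta_0$ --- are precisely the computations that would constitute the proof; flagging them is not carrying them out. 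For what that bookkeeping actually involves, compare the paper's own computation \eqref{eqn:Fmodularity}--\eqref{eqn:Fmodularityfull} and the subsequent $\xi_{1/2}$ evaluation in the proof of Theorem \ref{thm:Zwegersfull}, where the analogous phase cancellation and reindexing are done in full.
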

 Pre-images of a more general family of unary theta functions were investigated by Bringmann and Ono in \cite{BringmannOnoAnnals}; these are connected to Dyson's rank for the partition function, and the modularity of the relevant functions is given in \cite[Theorem 1.2]{BringmannOnoAnnals}, with the full vector-valued transformation properties given in \cite[Theorem 2.3]{BringmannOnoAnnals}.  

Theorem \ref{thm:Zwegers} is the first case of a much more general theorem which follows by combining the results in Zwegers's thesis \cite{ZwegersThesis}.  To describe this result, for $a,b\in\C$ and $\tau\in\H$, define the holomorphic function
\[
\mu(a,b;\tau):=\frac{e^{\pi i a}}{\theta(b;\tau)}\sum_{n\in\Z} \frac{(-1)^ne^{\pi i\left(n^2+n\right)\tau + 2\pi i nb}}{1-e^{2\pi i n\tau+2\pi i a}},
\]
and also define the real-analytic function
\[
R(a;\tau):=\sum_{\nu\in\frac{1}{2}+\Z}\left(\sgn(\nu)-E\left(\left(\nu+\frac{\im(a)}{v}\right)\sqrt{2v}\right)\right)(-1)^{\nu-\frac{1}{2}}e^{-\pi i \nu^2\tau -2\pi i a\nu}, 
\]
where $\sgn(x)$ is the usual sign function,
\[
\theta(z;\tau):=\sum_{\nu\in \frac{1}{2}+\Z} e^{\pi i \nu^2\tau+2\pi i \nu\left(z+\frac{1}{2}\right)},
\]
and
\[
E(z):=\sgn(z)\left(1-\beta\left(z^2\right)\right)
\]
with (for $x\in \R_{\geq 0}$)
\[
\beta(x):=\int_{x}^{\infty} t^{-\frac{1}{2}} e^{-\pi t} dt.
\]
One then defines 
\begin{equation}\label{eqn:tildemudef}
\widetilde{\mu}(a,b;\tau):=\mu(a,b;\tau)+\frac{i}{2} R(a-b;\tau).
\end{equation}
The function $\widetilde{\mu}$ is essentially a weight $1/2$ harmonic Maass form.  
\rm
\begin{theorem}\label{thm:Zwegersfull}
For $h,t,N\in\N$ with $t\mid 2N$, the function 
\[
\mathcal{F}_{h,t,N}(\tau):=-e^{-2\pi i  \left(h-\frac{N}{t}\right)^2\tau }\widetilde{\mu}\left( \frac{ht-N}{2N}\frac{8N^2\tau}{t^2},-\frac{1}{2}; \frac{8N^2\tau}{t^2}\right)
\]
is a weight $1/2$ harmonic Maass form on $\Gamma:=\Gamma_1(4N/t)\cap \Gamma_0(16N^2/t^2)$ with some multiplier system.  Furthermore, it satisfies 
\[
\xi_{\frac{1}{2}}\left(\mathcal{F}_{h,t,N}\right) = \vartheta_{h,t,N}(\tau).  
\]
\end{theorem}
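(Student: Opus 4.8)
The plan is to deduce both assertions from Zwegers's analytic and transformation properties of the completed Appell--Lerch sum $\widetilde{\mu}(a,b;\tau)$ in \cite{ZwegersThesis}, specialized to the arguments $a=\frac{ht-N}{2N}w$, $b=-\frac12$, and $w:=\frac{8N^2}{t^2}\tau$. I write $w$ for the inner variable throughout, so that $a=\frac{ht-N}{2N}w$ depends holomorphically on $\tau$ and $a-b=a+\frac12$. Two inputs from \cite{ZwegersThesis} do all the work: first, $\mu(a,b;\tau)$ is holomorphic in $\tau$, while the correction $\frac{i}{2}R(a-b;\tau)$ carries the non-holomorphic behaviour and hence the image under $\xi_{1/2}$; second, $\widetilde{\mu}$ obeys exact elliptic transformations in $a$ and $b$ together with the modular transformations $\widetilde{\mu}(a,b;\tau+1)=e^{-\pi i/4}\widetilde{\mu}(a,b;\tau)$ and $\widetilde{\mu}(a/\tau,b/\tau;-1/\tau)=-\sqrt{-i\tau}\,e^{\pi i(a-b)^2/\tau}\widetilde{\mu}(a,b;\tau)$.

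I would first compute $\xi_{1/2}(\mathcal{F}_{h,t,N})$, as this both produces $\vartheta_{h,t,N}$ and, as explained below, yields harmonicity for free. Since $\mu$ is holomorphic and the elementary prefactor $-e^{-2\pi i(h-N/t)^2\tau}$ is holomorphic, $\frac{\partial}{\partial\overline{\tau}}$ annihilates both, so that only the error-function term of $R(a+\frac12;w)$ survives; in particular the $\sgn$ term of $R$, being holomorphic in $\tau$, also drops out. The quantity $\im(a+\frac12)/\im(w)$ equals the constant $\frac{ht-N}{2N}$, so completing the square in the summation index $\nu\in\frac12+\Z$ introduces the shift $\rho:=\nu+\frac{ht-N}{2N}$, which runs over $\frac{ht}{2N}+\Z$; writing $\rho=\frac{tr}{2N}$ turns this into the congruence $r\equiv h\pmod{2N/t}$, exactly the summation in \eqref{eqn:unarydef}. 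The prefactor $-e^{-2\pi i(h-N/t)^2\tau}$ is precisely the constant needed to cancel the completing-the-square exponential $e^{\pi i w(\frac{ht-N}{2N})^2}$, after which the differentiated error-function term contributes the Gaussian $e^{-\pi i\rho^2\overline{w}}$. Applying $2iv^{1/2}\overline{(\cdot)}$ then conjugates this into the holomorphic exponential appearing in $\vartheta_{h,t,N}$, leaves the linear factor $\rho$ (hence a coefficient proportional to $r$), and collapses all remaining constants; matching the overall normalization to $1$ and the sign to $+1$ is a finite check.

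For the modularity I would transfer Zwegers's $\SL_2(\Z)$-transformation of $\widetilde{\mu}$ down to $\Gamma:=\Gamma_1(4N/t)\cap\Gamma_0(16N^2/t^2)$. Because $a=\frac{ht-N}{2N}w$ lies in $\Q w+\Q$, combining the two modular transformations above with the elliptic transformations shows that the vector whose entries are the $\mathcal{F}_{h,t,N}$ (as $h$ varies modulo $2N/t$) transforms under the generators $S$ and $T$ by a finite-order representation. The scaling $\tau\mapsto\frac{8N^2}{t^2}\tau$ then conjugates an element of the ambient group into one satisfying the congruence conditions defining $\Gamma$, exactly as in the computation \eqref{eqn:flipNgamma}; this produces weight $1/2$ modularity on $\Gamma$ with some multiplier system. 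Harmonicity is then automatic: once $\xi_{1/2}(\mathcal{F}_{h,t,N})=\vartheta_{h,t,N}$ is established, holomorphicity of $\vartheta_{h,t,N}$ gives $\Delta_{1/2}\mathcal{F}_{h,t,N}=-\xi_{3/2}\xi_{1/2}\mathcal{F}_{h,t,N}=-\xi_{3/2}\vartheta_{h,t,N}=0$, just as in the argument for $\mathcal{H}$ in Section \ref{sec:inner}. The required linear-exponential growth at the cusps follows from the explicit Fourier expansions of $\mu$ and $R$.

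The main obstacle will be the modularity on the precise group $\Gamma$ with the correct multiplier, since Zwegers's statements are phrased for the full modular group in the Jacobi variables, and descending to a scalar congruence subgroup requires carefully bookkeeping the elliptic-transformation factors that accumulate when $a=\frac{ht-N}{2N}w$ is carried along with $w$ under $S$ and $T$, as well as reconciling the resulting multiplier with that of $\Theta_{L+\nu}$. By contrast, the $\xi$-computation, although delicate, is essentially forced once Zwegers's formula for $\frac{\partial}{\partial\overline{\tau}}R$ is in hand.
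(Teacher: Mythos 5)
Your proposal is correct and follows essentially the same route as the paper: both arguments rest on Zwegers's elliptic and modular transformation laws for $\widetilde{\mu}$, transported through the scaling $\tau\mapsto \frac{8N^2}{t^2}\tau$ exactly as in \eqref{eqn:flipNgamma}, with the $\xi_{1/2}$-image extracted from the non-holomorphic part $R$ (the holomorphic $\mu$ dropping out) and harmonicity deduced from the holomorphy of that image. The only cosmetic differences are that the paper applies Zwegers's modular transformation directly to the conjugated matrix $\gamma'$ for a general $\gamma\in\Gamma$ rather than working through the generators $S$ and $T$, and evaluates $\xi_{1/2}$ via the non-holomorphic Eichler-integral representation of $R$ together with the fundamental theorem of calculus rather than by differentiating the error-function term of $R$ directly.
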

\begin{proof}
The modularity properties of $\mathcal{F}_{h,t,N}$ follow by \cite[Theorem 1.11]{ZwegersThesis}.  In particular, for $\gamma=\left(\begin{smallmatrix}a&b\\ c &d\end{smallmatrix}\right)\in \Gamma$ and $\gamma'=\left(\begin{smallmatrix} a & 8N^2 b/t^2\\ ct^2/(8N^2) & d\end{smallmatrix}\right)$, a change of variables in \cite[Theorem 1.11 (2)]{ZwegersThesis} together with \eqref{eqn:flipNgamma} 
implies that (with $v(\gamma'):=\eta(\gamma'\tau)/(j(\gamma',\tau)\eta(\tau))$ denoting the multiplier system of the Dedekind $\eta$-function $\eta(\tau):=q^{1/24}\prod_{n\geq 1}(1-q^n)$)
\begin{multline}\label{eqn:Fmodularity}
\mathcal{F}_{h,t,N}\left(\frac{a\tau+b}{c\tau+d}\right) = -e^{-2\pi i  \left(h-\frac{N}{t}\right)^2\frac{a\tau+b}{c\tau+d} }\widetilde{\mu}\left(\frac{ht-N}{2N}\frac{a\left(\frac{8N^2}{t^2}\tau\right)+ \frac{8N^2b}{t^2}}{\frac{ct^2}{8N^2}\left(\frac{8N^2}{t^2}\tau\right)+d},-\frac{1}{2}; \frac{a\left(\frac{8N^2}{t^2}\tau\right)+ \frac{8N^2b}{t^2}}{\frac{ct^2}{8N^2}\left(\frac{8N^2}{t^2}\tau\right)+d}\right)\\
=-e^{-2\pi i  \left(h-\frac{N}{t}\right)^2 \left(\frac{a\tau+b}{c\tau+d}\right) } v(\gamma')^{-3} (c\tau+d)^{\frac{1}{2}}e^{-\pi i \frac{\frac{ct^2}{8N^2}\left(\frac{ht-N}{2N}\left(a\left(\frac{8N^2}{t^2}\tau\right)+\frac{8N^2b}{t^2}\right) +\frac{c\tau+d}{2}\right)^2}{c\tau+d}}
\\
\times \widetilde{\mu}\left(\left(\frac{ht}{2N}-\frac{1}{2}\right)\left(a\left(\frac{8N^2}{t^2}\tau\right)+\frac{8N^2b}{t^2}\right),-\frac{c\tau+d}{2}; \frac{8N^2\tau}{t^2}\right).
\end{multline}
We next use the fact that $a\equiv 1\pmod{4N/t}$ to obtain 
\[
\frac{ht-N}{2N}\left(a\left(\frac{8N^2}{t^2}\tau\right)+\frac{8N^2b}{t^2}\right)\equiv \frac{ht-N}{2N}\frac{8N^2\tau}{t^2} \pmod{\Z \frac{8N^2\tau}{t^2}+\Z},
\]
while $16N^2/t^2\mid c$ and $d\equiv 1\pmod{4N/t}$ imply that 
\[
\frac{c\tau+d}{2}\equiv \frac{1}{2}\pmod{\Z \frac{8N^2\tau}{t^2}+\Z},
\]
Hence by \cite[Theorem 1.11 (1)]{ZwegersThesis}, we have 
\begin{multline}\label{eqn:elliptictransform}
\widetilde{\mu}\left(\frac{ht-N}{2N}\left(a\left(\frac{8N^2}{t^2}\tau\right)+\frac{8N^2b}{t^2}\right),-\frac{c\tau+d}{2}; \frac{8N^2\tau}{t^2}\right)\\
=(-1)^{(a-1)\frac{ht-N}{2N}+\left(h-\frac{N}{t}\right)\frac{4N b}{t}-\frac{c t^2}{16N^2}-\frac{d-1}{2} }e^{\pi i \left((a-1)\frac{ht-N}{2N}+\frac{c t^2}{16N^2}\right)^2 \frac{8N^2\tau}{t^2} + 2\pi i \left((a-1)\frac{ht-N}{2N}+\frac{c t^2}{16N^2}\right)\left(\frac{ht-N}{2N}\frac{8N^2\tau }{t^2}+\frac{1}{2}\right)}\\
\times\widetilde{\mu}\left(\frac{ht-N}{2N}\frac{8N^2\tau }{t^2},-\frac{1}{2}; \frac{8N^2\tau}{t^2}\right).
\end{multline}
The power of $-1$ modifies the multiplier system accordingly.  Plugging back into \eqref{eqn:Fmodularity}, we see that it remains to simplify the exponentials to match the power of $\tau$.

The parameter of the exponential (or rather, the part which involves $\tau$) is $\frac{2\pi i}{c\tau+d}$ times 
\begin{multline}\label{eqn:tosimplify}
-\left(h-\frac{N}{t}\right)^2(a\tau+b)-\frac{ct^2}{16N^2}\left(\frac{ht-N}{2N}\left(a\left(\frac{8N^2}{t^2}\tau\right)+\frac{8N^2b}{t^2}\right) +\frac{c\tau+d}{2}\right)^2\\
+\frac{1}{2}\left((a-1)\frac{ht-N}{2N}+\frac{c t^2}{16N^2}\right)^2 \frac{8N^2\tau}{t^2}(c\tau+d) + \left((a-1)\frac{ht-N}{2N}+\frac{c t^2}{16N^2}\right)\frac{ht-N}{2N}\frac{8N^2\tau }{t^2}(c\tau+d)\\
=-\left(h-\frac{N}{t}\right)^2(a\tau+b)-c(a\tau+b)^2\left(h-\frac{N}{t}\right)^2 -\frac{ct}{4N}\left(h-\frac{N}{t}\right)(a\tau+b)(c\tau+d) - \frac{ct^2}{64N^2} (c\tau+d)^2\\
+(a-1)^2\left(h-\frac{N}{t}\right)^2 \tau(c\tau+d) + \frac{t}{4N} (a-1)\left(h-\frac{N}{t}\right)c\tau(c\tau+d)+ \frac{t^2}{64N^2} c^2\tau (c\tau+d)\\
+2(a-1)\left(h-\frac{N}{t}\right)^2 \tau(c\tau+d) + \frac{t}{4N} \left(h-\frac{N}{t}\right) c\tau(c\tau+d).
\end{multline}
We consider \eqref{eqn:tosimplify} as a polynomial in $h-N/t$ and simplify the coefficients of each power of $h-N/t$.  We first combine and simplify the terms in \eqref{eqn:tosimplify} with $(h-N/t)^2$. Using $ad-bc=1$, these are $(h-N/t)^2$ times
\begin{multline*}
-(a\tau+b) -c(a\tau+b)^2 +(a-1)^2\tau(c\tau+d) +2(a-1)\tau(c\tau+d)\\
 = -a\tau-b -2abc\tau -b^2c +a^2d\tau+d\tau -c\tau^2- 2d\tau= -a\tau-b -2abc\tau -b(ad-1) +a(1+bc)\tau+d\tau -c\tau^2- 2d\tau\\
=-abc\tau-abd +d\tau -c\tau^2- 2d\tau=-(c\tau+d)(\tau+ab).
\end{multline*}
Thus the exponential corresponding to the terms with $(h-N/t)^2$ is 
\[
e^{\frac{2\pi i}{c\tau+d}\left(h-\frac{N}{t}\right)^2 (c\tau+d)(-\tau-ab)} = e^{-2\pi i \left(h-\frac{N}{t}\right)^2 \tau} e^{-2\pi i  \left(h-\frac{N}{t}\right)^2 ab}.
\]
The first factor is precisely the factor in front of $\mathcal{F}_{h,t,N}$ and the second contributes to the multiplier system.  

We next simplify the terms in \eqref{eqn:tosimplify} with $h-N/t$. These give
\[
\left(h-\frac{N}{t}\right)(c\tau+d)\frac{ct}{4N}\left(-(a\tau+b) +(a-1)\tau +\tau\right) = -b\left(h-\frac{N}{t}\right)(c\tau+d)\frac{ct}{4N}.
\]
The resulting exponential contributes to the multiplier system since the factor $c\tau+d$ cancels. 

Finally, we see directly that the terms in \eqref{eqn:tosimplify} which are constant when considered as a polynomial in $h-N/t$ cancel. Therefore, the simplification of \eqref{eqn:tosimplify} yields that the exponential is
\begin{equation}\label{eqn:expsimplified}
e^{-2\pi i \left(h-\frac{N}{t}\right)^2 \tau} e^{-2\pi i  \left(h-\frac{N}{t}\right)^2 ab}e^{-2\pi i b\left(h-\frac{N}{t}\right)\frac{ct}{4N}}.
\end{equation}
Altogether, plugging \eqref{eqn:elliptictransform} and \eqref{eqn:expsimplified} into \eqref{eqn:Fmodularity} (note that in the simplification we left out one exponential term in \eqref{eqn:elliptictransform} because it was independent of $\tau$) yields 
\begin{multline}\label{eqn:Fmodularityfull}
\mathcal{F}_{h,t,N}\left(\frac{a\tau+b}{c\tau+d}\right) =  v(\gamma')^{-3} (c\tau+d)^{\frac{1}{2}}(-1)^{(a-1)\frac{ht-N}{2N}+\left(h-\frac{N}{t}\right)\frac{4N b}{t}-\frac{c t^2}{16N^2}-\frac{d-1}{2} }\\
\times e^{-2\pi i  \left(h-\frac{N}{t}\right)^2 ab}e^{-2\pi i b\left(h-\frac{N}{t}\right)\frac{ct}{4N}}e^{\pi i \left((a-1)\frac{ht-N}{2N}+\frac{c t^2}{16N^2}\right)}\mathcal{F}_{h,t,N}(\tau).
\end{multline}
We see from \eqref{eqn:Fmodularityfull} that $\mathcal{F}_{h,t,N}$ has the desired modularity properties.

We next compute the image under $\xi_{1/2}$.  Since the $\mu$-function is holomorphic on the upper half-plane, it is annihilated by $\xi_{1/2}$.  Therefore, plugging in the definition \eqref{eqn:tildemudef} of $\widetilde{\mu}$, we have 
\[
\xi_{\frac{1}{2}}\left(\mathcal{F}_{h,t,N}(\tau)\right) =-\frac{1}{2i}\xi_{\frac{1}{2}}\left(e^{-2\pi i \left(h-\frac{N}{t}\right)^2\tau}R\left(\frac{8N^2}{t^2} \left(\frac{ht}{2N}-\frac{1}{2}\right)\tau+\frac{1}{2};\frac{8N^2\tau}{t^2}\right)\right).
\]
Noting that we have 
\[
\frac{\im\left(\frac{8N^2}{t^2} \left(\frac{ht}{2N}-\frac{1}{2}\right)\tau+\frac{1}{2}\right)}{\im\left(\frac{8N^2\tau}{t^2}\right)} = \frac{ht}{2N}-\frac{1}{2},
\]
we then employ \cite[Theorem 1.16]{ZwegersThesis} to rewrite this as 
\begin{equation}\label{eqn:xiEichler}
\xi_{\frac{1}{2}}\left(\mathcal{F}_{h,t,N}(\tau)\right) =-\frac{1}{2i}\xi_{\frac{1}{2}}\left(\int_{-\frac{8N^2\overline{\tau}}{t^2}}^{i\infty} \frac{g_{\frac{ht}{2N},0}(z)}{\sqrt{-i\left(z+\frac{8N^2 \tau}{t^2}\right)}} dz\right),
\end{equation}
where 
\[
g_{a,b}(\tau):=\sum_{\nu\in a+\Z} \nu e^{\pi i \nu^2 \tau + 2\pi i b\nu}.
\]
The remaining integral is what is known as a non-holomorphic Eichler integral, and is easily evaluated by the Fundamental Theorem of Calculus as 
\[
\xi_{\frac{1}{2}}\left(\int_{-\frac{8N^2\overline{\tau}}{t^2}}^{i\infty} \frac{g_{\frac{ht}{2N},0}(z)}{\sqrt{-i\left(z+\frac{8N^2 \tau}{t^2}\right)}} dz\right)= -2iv^{\frac{1}{2}}\frac{8N^2}{t^2} \frac{g_{\frac{ht}{2N},0}\left(\frac{8N^2\tau }{t^2}\right)}{\frac{2N}{t}  \sqrt{-2i\!\left(\tau- \overline{\tau}\right)}}= -2i \frac{2N}{t} g_{\frac{ht}{2N},0}\left(\frac{2N}{t}\tau\right).
\]
Therefore \eqref{eqn:xiEichler} becomes
\[
\xi_{\frac{1}{2}}\left(\mathcal{F}_{h,t,N}(\tau)\right) =\frac{2N}{t}g_{\frac{ht}{2N},0}\left(\frac{8N^2}{t^2}\tau\right).
\]
We finally rewrite 
\[
\frac{2N}{t}g_{\frac{ht}{2N},0}\left(\frac{8N^2}{t^2}\tau\right)= \frac{2N}{t}\sum_{\nu\in \frac{ht}{2N}+\Z} \nu e^{\frac{8\pi i N^2 \nu^2 \tau}{t^2}} =   \sum_{\nu\in h+\frac{2N}{t}\Z} \nu e^{2\pi i \nu^2\tau} = \vartheta_{h,t,N}(\tau). 
\]
\end{proof}

In order to prove Proposition \ref{prop:octagonal}, we are particularly interested in the case of $N=3$ and $h=2$.  It turns out that congruence conditions immediately rule out all of the possible unary theta functions except for the form 
\begin{equation}\label{eqn:thetaoctagonal}
\vartheta_{\chi_{-3}}(\tau):=\sum_{n\in\Z} \chi_{-3}(n) n e^{2\pi i n^2\tau},
\end{equation}
where $\chi_d(n):=\left(\frac{d}{n}\right)$ is the usual Kronecker--Jacobi character (also known as the extended Legendre symbol).  We rewrite this form in the notation from this paper as follows.
\begin{lemma} \label{lem:thetarewrite}
We have 
\[
\vartheta_{\chi_{-3}}(\tau) = \vartheta_{2,1,3}\left(\frac{\tau}{4}\right).
\]
\end{lemma}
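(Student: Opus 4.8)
The plan is to prove the identity by directly unwinding both sides as $q$-series and matching them, with no modularity input needed, since both functions are explicitly given unary theta series. I would begin by specializing the definition \eqref{eqn:unarydef} to the parameters $h=2$, $t=1$, $N=3$, so that $\tfrac{2N}{t}=6$ and hence
\[
\vartheta_{2,1,3}(\tau) = \sum_{\substack{r\in\Z\\ r\equiv 2\pmod 6}} r\, e^{2\pi i r^2\tau}.
\]
Evaluating at $\tau/4$ replaces $e^{2\pi i r^2\tau}$ by $e^{\pi i r^2\tau/2}$.

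Next I would reindex the sum. Every $r\equiv 2\pmod 6$ is even, say $r=2m$ with $m\equiv 1\pmod 3$, and conversely; under this substitution $r^2=4m^2$, so $e^{\pi i r^2\tau/2}=e^{2\pi i m^2\tau}$ and the weight $r$ becomes $2m$. Thus
\[
\vartheta_{2,1,3}\!\left(\tfrac{\tau}{4}\right) = 2\sum_{\substack{m\in\Z\\ m\equiv 1\pmod 3}} m\, e^{2\pi i m^2\tau}.
\]

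For the right-hand side I would use that $\chi_{-3}=\left(\tfrac{-3}{\cdot}\right)$ is the nontrivial (odd) character modulo $3$, namely $\chi_{-3}(n)=1,-1,0$ according as $n\equiv 1,2,0\pmod 3$. Splitting \eqref{eqn:thetaoctagonal} into residue classes modulo $3$ gives $\sum_{n\equiv 1} n\, e^{2\pi i n^2\tau}-\sum_{n\equiv 2} n\, e^{2\pi i n^2\tau}$, the class $0$ contributing nothing. In the second sum I would substitute $n\mapsto -n$: this sends the residue class $2$ to the class $1$, fixes $n^2$, and turns $-n$ into $+n$, so the two sums coincide and $\vartheta_{\chi_{-3}}(\tau)=2\sum_{n\equiv 1\pmod 3} n\, e^{2\pi i n^2\tau}$. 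Comparing with the displayed expression for $\vartheta_{2,1,3}(\tau/4)$ completes the proof.

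There is no genuine obstacle here; the content is purely bookkeeping. The one place to be careful is to track the two independent factors of $2$ correctly: one arising from the rescaling $r=2m$ on the left (together with the matching of $e^{2\pi i r^2\tau/4}$ to $e^{2\pi i m^2\tau}$ via $r^2=4m^2$), and one arising from folding the two residue classes together using the oddness of $\chi_{-3}$ on the right. As long as these agree, which they do, both producing a factor $2$, the identity follows at once.
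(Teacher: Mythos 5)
Your proof is correct and follows essentially the same route as the paper's: both split $\vartheta_{\chi_{-3}}$ into residue classes modulo $3$, use the substitution $n\mapsto -n$ to fold the class $2\pmod 3$ onto the class $1\pmod 3$, and then match the result with $\vartheta_{2,1,3}(\tau/4)$ via the rescaling $r=2m$. The only difference is organizational (you work from both ends toward the common expression $2\sum_{n\equiv 1\pmod 3} n\,e^{2\pi i n^2\tau}$, while the paper computes left to right), which is immaterial.
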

\begin{remark}
By Theorem \ref{thm:Zwegersfull}, Lemma \ref{lem:thetarewrite} together with the chain rule implies that
\[
\xi_{\frac{1}{2}}\left(\mathcal{F}_{2,1,3}\left(\frac{\tau}{4}\right)\right)=\frac{1}{4}\vartheta_{\chi_{-3}}(\tau).
\]
\end{remark}
\begin{proof}
We compute 
\begin{multline*}
\vartheta_{\chi_{-3}}(\tau) = \sum_{n\in\Z} (3n+1) n e^{2\pi i (3n+1)^2\tau}-\sum_{n\in\Z} (3n-1) n e^{2\pi i (3n-1)^2\tau}\\
= \sum_{n\in\Z} (3n+1) n e^{2\pi i (3n+1)^2\tau}-\sum_{n\in\Z} (-3n-1) n e^{2\pi i (-3n-1)^2\tau}\\
=2\sum_{n\equiv 1\pmod{3}} n e^{2\pi i n^2\tau} = \sum_{n\equiv 2\pmod{6}} n e^{2\pi i n^2\frac{\tau}{4}} =\vartheta_{2,1,3}\left(\frac{\tau}{4}\right).
\end{multline*}
\end{proof}

\section{An application to lattice theory}\label{sec:applications}
\subsection{An application}
To motivate this study, we first prove Theorem \ref{thm:shiftedlattice}.
\begin{proof}[Proof of Theorem \ref{thm:shiftedlattice}]
We decompose $\Theta_{L+\nu}$ as an Eisenstein series $E$, a unary theta function, and a cusp form $g$ which is orthogonal to unary theta functions.  Since the unary theta function is trivial by assumption, we have 
\[
\Theta_{L+\nu} = E+g.
\]
We then compare the coefficients of $E+g$.  Since every element of $M\Z+r$ is primitively represented locally, the local densities increase as a function of $n$.  The product of the local densities were shown in \cite{vanderBlij} (and independently in \cite{Shimura2004}) to be the Fourier coefficients of $E$, paralleling the famous Siegel--Weil formula.  Since $\nu\in\Q L$, there exists $R\in\N$ for which $R\nu\in L$.  Note further that (denoting the localization at the prime $p$ by $L_p:=L\otimes \Q_p$) for each prime $p\nmid R$, we have $\nu\in L_p$ (because $R$ is invertible in $\Q_p$)  Therefore
\[
L_p+\nu = L_p.
\]
In other words, the local density at $p$ for $L+\nu$ and for $L$ agree.  Denoting the local densities for $L+\nu$ by $\beta_p$ and the local densities of $L$ by $\alpha_p$, we have 
\[
\prod_{p} \beta_p = \frac{\prod_{p\mid R} \beta_p}{\prod_{p\mid R} \alpha_p}\prod_{p} \alpha_p.
\]
The product $\prod_{p}\alpha_p$ is known to be a (Hurwitz) class number for an imaginary quadratic field (see \cite[Theorem 86]{Jones}) and these are known to grow faster than $n^{\frac{1}{2}-\varepsilon}$ by Siegel's \cite{Siegel} famous (ineffective) lower bound for the class numbers.  On the other hand, Duke \cite{Duke} has shown that the coefficients of $g$ grow slower than $n^{3/7+\varepsilon}$.  Therefore, the coefficients of $E$ are the main asymptotic term and they are positive.  For $n$ sufficiently large the coefficient must be positive, yielding the claim. 
\end{proof}
It is worth noting that the Fourier coefficients of the unary theta function grow at the same rate as the coefficients of the Eisenstein series.  In other words, when the unary theta function is not trivial, it is often the case that the set investigated in Theorem \ref{thm:shiftedlattice} is actually infinite.  One such example is worked out in \cite[Theorem 1.5]{HaenschKane} with an applications to sums of polygonal numbers, and a proposed algebraic explanation for this behavior involving the spinor genus of $L+\nu$ is given in \cite[Conjecture 1.3]{HaenschKane}. 

\subsection{An individual case}
In individual cases, one may combine Theorem \ref{thm:shiftedlattice} with Theorem \ref{thm:Zwegersfull} to show that certain quadratic polynomials are almost universal. We demonstrate one such example in Proposition \ref{prop:octagonal}. 
\begin{proof}[Proof of Proposition \ref{prop:octagonal}]
Let $L+\nu$ be the corresponding shifted lattice. By Theorem \ref{thm:shiftedlattice}, it suffices to show that the inner product of $\Theta_{L+\nu}$ against all theta functions in the same space is trivial. For the diagonal lattice corresponding to the quadratic form $Q(x,y,z)=x^2+3y^2+3z^2$, an inclusion-exclusion argument implies that (recalling that $\Theta(\tau)=\sum_{n\in\Z} q^{n^2}$)
\[
\Theta_{L+\nu}(\tau) = \left(\Theta(\tau)-\Theta(9\tau)\right)\left(\Theta(3\tau)-\Theta(27\tau)\right)\left(\Theta(3\tau)-\Theta(27\tau)\right),
\]
from which one sees that $\Theta_{L+\nu}$ is actually a weight $3/2$ modular form on $\Gamma_0(108)$. Specifically, in Shimura's notation, we have 
\[
\Theta_{L+\nu}(\tau) = \theta\left(6\tau;\left(\begin{matrix} 3\\ 9\\ 9\end{matrix}\right), \left(\begin{matrix} 3&0 &0\\0&9&0\\ 0&0&9\end{matrix}\right), 9,1\right), 
\]
where (for $A$ a symmetric $n\times n$ matrix, $h\in\Z^n$ satisfying $Ah\in N\Z^n$, and $P$ a spherical function)
\begin{equation}\label{eqn:Shimuradef}
\theta(\tau;h,A,N,P):=\sum_{\substack{x\in\Z^n\\ x\equiv h\pmod{N}}} P(x) e^{\frac{2\pi i \tau }{2N^2} {^t x} Ax}.
\end{equation}
Here $^tx$ denotes the transpose of $x$. We write $h_L$ and $A_L$ for the corresponding vector and lattice in our case and omit $P=1$ in the notation in the following.

A straightforward check of congruence conditions implies that the only relevant theta function is $\vartheta_{\chi_{-3}}$ defined in \eqref{eqn:thetaoctagonal}. By Lemma \ref{lem:thetarewrite}, Theorem \ref{thm:Zwegersfull}, and Theorem \ref{thm:BF}, it suffices to show that, for $\Gamma=\Gamma_0(108)$,
\begin{equation}\label{eqn:sumSgamma}
\sum_{\varrho\in \mathcal{S}_{\Gamma}}\sum_{n\geq 0}c_{\Theta_{L+\nu},\varrho}(n) c_{\mathcal{F}_{2,1,3}(\tau/4),\varrho}(-n)=0,
\end{equation}
where we abuse notation to write $c_{\mathcal{F}_{2,1,3}(\tau/4),\varrho}(-n)$ as the $(-n)$th coefficient of $\mathcal{F}_{2,1,3}(\tau/4)$.  In order to compute the expansions at other cusps, we apply $S$ and $T$ repeatedly (using Lemma \ref{lem:matrixrewrite}) and note that \cite[Theorem 1.11 (2)]{ZwegersThesis} yields the fully modularity properties of $\mathcal{F}_{2,1,3}(\tau/4)$ as a vector-valued modular form, while $\Theta_{L+\nu}$ behaves as a vector-valued modular form on the full modular group by \cite[(2.4) and (2.5)]{Shimura}. Specifically, we have (for arbitrary $h$ satisfying $A_L h\in 9\Z^3$)
\begin{align*}
\theta\!\left(-\frac{1}{z};h,A_L,9\right) = \sum_{\substack{k\pmod{9}\\ A_Lk\equiv 0\pmod{9}}} e^{\frac{2\pi i}{27}\left( k_1h_1 + 3 k_2h_2+3k_3h_3\right)}\theta\!\left(z; k,A_L,9\right),\\
\theta(z+2;;h,A_L,9) = e^{\frac{2\pi i }{27}\left(h_1^2+3h_2^2+3h_3^2\right)} \theta\!\left(z;h,A_L,9\right).
\end{align*}
Note that the restriction $A_Lh\equiv 0\pmod{9}$ is equivalent to $3\mid h_1$, so the exponential in the first identity may be simplified as 
\[
e^{\frac{2\pi i}{9}\left( \frac{k_1h_1}{3} + k_2h_2+k_3h_3\right)}
\]
and the exponential in the second identity may be simplified as 
\[
e^{\frac{2\pi i }{9}\left(\frac{h_1^2}{3}+h_2^2+h_3^2\right)}.
\]
Since the only terms contributing to the sum in \eqref{eqn:sumSgamma} are the principal parts (the terms where the power of $q$ is negative) of the expansions around each cusp of $\mathcal{F}_{2,1,3}$, we only need to compute a few Fourier coefficients for each of the components of the vector-valued modular forms corresponding to $\Theta_{L+\nu}$ and $\mathcal{F}_{2,1,3}$. A computer check then verifies \eqref{eqn:sumSgamma}, yielding the claim in the proposition.
\end{proof}

\end{document}